\documentclass[10pt]{amsart}
\usepackage{amsmath}
\usepackage{amssymb} 
\usepackage{epsbox}
\usepackage{amsthm}
\usepackage{graphicx}

\begin{document}  
\title{Braid ordering and knot genus}
\author{Tetsuya Ito}
\address{Graduate School of Mathematical Science, University of Tokyo, 3-8-1 Komaba, Meguro-ku, Tokyo, 153-8914, Japan}
\email{tetitoh@ms.u-tokyo.ac.jp}
\subjclass[2000]{Primary~57M25, Secondary~57M50}
\keywords{Braid groups, Dehornoy ordering, Dehornoy floor, knot genus}

%Difinition of theorem enviroment
 
\newtheorem{thm}{Theorem}
\newtheorem{cor}{Corollary}
\newtheorem{lem}{Lemma}
\newtheorem{prop}{Proposition}
 {\theoremstyle{definition}
  \newtheorem{defn}{Definition}}
 \newcommand{\C}{\mathbb{C}}

%end of definition

\begin{abstract}
   The genus of knots is one of the fundamental invariant and can be seen as a complexity of knots.
In this paper we give a lower bound of the knot genus using the Dehornoy floor, which is a measure of complexity of braids in terms of the Dehornoy ordering. 
\end{abstract}
\maketitle

\section{Introduction}

   Let $B_{n}$ be the degree $n$ braid group, defined by the presentation
\[
B_{n} = 
\left\langle
\sigma_{1},\sigma_{2},\cdots ,\sigma_{n-1}
\left|
\begin{array}{ll}
\sigma_{i}\sigma_{j}=\sigma_{j}\sigma_{i} & |i-j|\geq 2 \\
\sigma_{i}\sigma_{j}\sigma_{i}=\sigma_{j}\sigma_{i}\sigma_{j} & |i-j|=1 \\
\end{array}
\right.
\right\rangle.
\]
 
  The {\it Dehornoy ordering} $<_{D}$ is a left-invariant total ordering of the braid group $B_{n}$ defined as follows. For two $n$-braids $\alpha$ and $\beta$, we define $\alpha <_{D} \beta$ if and only if the braid $\alpha^{-1}\beta$ admit a word representative which contains at least one $\sigma_{i}$ and contains no $\sigma_{1}^{\pm1},\sigma_{2}^{\pm1}\cdots ,\sigma_{i-1}^{\pm 1},\sigma_{i}^{-1}$ for some $1\leq i \leq n-1$. Many other equivalent definitions of the Dehornoy ordering are known, hence the Dehornoy ordering is a quite natural structure of the braid group $B_{n}$ \cite{ddrw1},\cite{ddrw2}.

Let $\Delta =  (\sigma_{1}\sigma_{2}\cdots\sigma_{n-1})(\sigma_{1}\sigma_{2}\cdots\sigma_{n-2})\cdots(\sigma_{1}\sigma_{2})(\sigma_{1})\in B_{n}$ be the Garside fundamental braid. The Garside fundamental braid has special properties and plays an important role in the braid group. For example, the center of the braid group is an infinite cyclic group which is generated by $\Delta^{2}$.
Using the Dehornoy ordering and the Garside fundamental braid $\Delta$, for each braid $\beta$, we define the {\it Dehornoy floor} $[\beta]_{D}$, which is a measure of a complexity of braids, as follows.

\begin{defn}[Dehornoy floor]
  The Dehornoy floor $[\beta]_{D}$ of a braid $\beta \in B_{n}$ is a non-negative integer defined by  
\[ [\beta]_{D} = \min \{ m \in \mathbb{Z}_{\geq 0}\: |  \Delta^{-2m-2} <_{D} \beta  <_{D} \Delta^{2m+2}\:  \}. \]
\end{defn}

  The purpose of this paper is to compare the Dehornoy floor, which is a fundamental complexity of a braid in the Dehornoy ordering view point, and the genus of links described as a closure of the braid, which is the most fundamental complexity of links in topological view point. 

   Our main result is the following.
\begin{thm}
\label{thm:main}
Let $\beta \in B_{n}$ be a braid and $\chi(\widehat{\beta})$ be the maximal Euler characteristics of an orientable spanning surface whose boundary is $\widehat{\beta}$. Then, the inequality 
\[ [\beta ]_{D} < \frac{3}{2}-\frac{2\chi(\widehat{\beta})}{n+2}\]
holds.
\end{thm}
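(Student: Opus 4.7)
The plan is to translate the algebraic condition $[\beta]_{D} = m$ into a topological lower bound on $-\chi(\widehat{\beta})$. A naive approach via Bennequin's inequality alone must fail: for example, $\beta = \Delta^{2m}\sigma_{1}\sigma_{n-1}^{-K}$ satisfies $[\beta]_{D} = m$ for appropriate large $K$, yet its exponent sum $e(\beta) = mn(n-1) + 1 - K$ can be made nearly zero, rendering the Bennequin estimate $|e(\beta)|-n \leq -\chi(\widehat{\beta})$ vacuous. Hence some topological analysis of a Seifert surface is required.

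The first step is a reduction to $\beta \geq_{D} \Delta^{2m}$. The centrality of $\Delta^{2}$ in $B_{n}$ yields $[\beta^{-1}]_{D} = [\beta]_{D}$, since the $\sigma$-positivity conditions defining the two floors coincide after repeatedly using $\Delta^{2k}\beta = \beta\Delta^{2k}$. Because $\widehat{\beta^{-1}}$ is the mirror of $\widehat{\beta}$, we also have $\chi(\widehat{\beta^{-1}}) = \chi(\widehat{\beta})$. The condition $[\beta]_{D} = m$ forces $\beta \geq_{D} \Delta^{2m}$ or $\beta \leq_{D} \Delta^{-2m}$, and the latter is equivalent (again by centrality) to $\beta^{-1} \geq_{D} \Delta^{2m}$. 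After replacing $\beta$ by $\beta^{-1}$ if needed, I may therefore assume that $\gamma := \Delta^{-2m}\beta$ is $\sigma$-positive or trivial.

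The heart of the proof is the lemma $-\chi(\widehat{\beta}) \geq (n+2)m/2 - 3(n+2)/4$, which rearranges to the claimed strict inequality. I would prove it by taking a spanning surface $F$ with $\chi(F) = \chi(\widehat{\beta})$, putting $F$ in general position with the braid axis $A$, and analyzing the induced braid foliation in the sense of Birman--Menasco. After simplifying by exchange and destabilization moves to obtain a tight foliation, the Euler characteristic formula expresses $-\chi(F)$ as a weighted count of combinatorial tiles; the target is to show that each of the $m$ factors of $\Delta^{2}$ encoded in the inequality $\beta \geq_{D} \Delta^{2m}$ forces at least $(n+2)/2$ tiles in such a foliation.

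The main obstacle is making this counting rigorous: one must show that the purely algebraic $\sigma$-positivity of $\gamma$ manifests in the foliation of any minimal $F$ as $m$ essentially disjoint full-twist annular regions, each contributing at least $(n+2)/2$ to $-\chi(F)$. This is a Dehornoy-floor analog of Birman--Menasco's geometric interpretation of composite and cabled closed braids, and without such a topological translation the writhe-cancellation pathology illustrated in the first paragraph cannot be circumvented. Once this correspondence is in hand, the stated inequality follows by arithmetic.
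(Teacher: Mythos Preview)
Your proposal has a genuine gap: the step you yourself label ``the main obstacle'' is never carried out. You assert that the $\sigma$-positivity of $\Delta^{-2m}\beta$ should manifest in any minimal-$\chi$ braid foliation as $m$ disjoint full-twist annular regions, each contributing at least $(n+2)/2$ to $-\chi(F)$, and then conclude ``once this correspondence is in hand, the stated inequality follows by arithmetic.'' But establishing that correspondence \emph{is} the theorem; without it you have only a heuristic. There is no known mechanism by which the algebraic condition $\beta \geq_{D} \Delta^{2m}$ forces such global structure in an arbitrary incompressible Seifert surface, and your writhe-cancellation example already shows why a naive version cannot work.

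The paper's argument runs in the opposite direction and avoids this difficulty entirely. Rather than starting from $[\beta]_{D}=m$ and seeking structure in $F$, it starts from the foliation of $F$ and bounds $[\beta]_{D}$ from above. The key lemma is local: if $F$ has a vertex $v$ of type $(a,b)$ (meaning $a$ adjacent $a$-arcs and $b$ adjacent $b$-arcs), then by placing $v$ at the leftmost position and reading off the braiding around $v$ one obtains a conjugate of $\beta$ with at most $a+\tfrac{b-1}{2}$ occurrences of $\sigma_{1}$ (and of $\sigma_{1}^{-1}$), whence $[\beta]_{D} < a+\tfrac{b}{2}-\tfrac{1}{2}$ by the $\sigma_{1}$-counting bound in Proposition~\ref{prop:dehornoyfloor}. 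The global step is then pure combinatorics: choose $(a,b)$ minimizing $a+\tfrac{b}{2}$ among vertex types present; the Birman--Menasco Euler characteristic formula gives $-4\chi(F)\geq (2a+b-4)\cdot(\text{number of vertices})$, and when $b$-arcs are present there are at least $n+2$ vertices. This yields $a+\tfrac{b}{2}\leq 2-\tfrac{2\chi(F)}{n+2}$, and combining with the local lemma finishes the proof. No reduction to $\beta\geq_{D}\Delta^{2m}$ and no identification of ``full-twist regions'' is needed.
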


   As a consequence, we obtain a new lower bound of knot genus.
\begin{cor}
\label{c1}
Let $K$ be an oriented knot and $g(K)$ be the genus of $K$. If $K$ is represented as the closure of an $n$-braid $\beta$, then the inequality 
\[ [\beta ]_{D} < \frac{4g(K)}{n+2} -\frac{2}{n+2}+ \frac{3}{2} \leq g(K)+1\]
holds. 
\end{cor}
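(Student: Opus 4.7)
The plan is to derive the corollary as a direct consequence of Theorem~\ref{thm:main}, using the standard relation between Euler characteristic and genus for surfaces with a single boundary component. The first step is to observe that since $K$ is an oriented knot, any orientable spanning surface $\Sigma$ for $K$ has exactly one boundary component, so $\chi(\Sigma) = 1 - 2\operatorname{genus}(\Sigma)$. Maximizing $\chi$ over all such spanning surfaces therefore corresponds to minimizing the genus, yielding the identity $\chi(\widehat{\beta}) = 1 - 2g(K)$.

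Substituting this identity into the bound of Theorem~\ref{thm:main} then produces the first inequality after a short simplification:
\[
[\beta]_D < \frac{3}{2} - \frac{2(1 - 2g(K))}{n+2} = \frac{4g(K)}{n+2} - \frac{2}{n+2} + \frac{3}{2}.
\]

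The second inequality is a purely algebraic comparison. Clearing the common denominator $2(n+2)$ should reduce it to $(n-2)(2g(K)-1) \geq 0$, which is manifest for any knot with $g(K) \geq 1$ represented as a closed braid on $n \geq 2$ strands. I do not anticipate any genuine obstacle here: the statement is essentially a substitution into the main theorem, followed by a bookkeeping step that repackages the bound in the cleaner form $g(K)+1$. All of the real content of the corollary is already contained in Theorem~\ref{thm:main}, and so the proof should be no more than a few lines.
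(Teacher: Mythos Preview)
Your approach is exactly the one the paper intends: the corollary is stated without proof and is meant to follow from Theorem~\ref{thm:main} by substituting $\chi(\widehat{\beta}) = 1 - 2g(K)$, which is precisely what you do, and your algebra for both inequalities is correct.

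One small caveat worth noting: your reduction of the second inequality to $(n-2)(2g(K)-1) \geq 0$ is correct, but this fails when $g(K)=0$ and $n \geq 3$ (the unknot represented on three or more strands). The paper does not address this edge case either, so strictly speaking the second inequality in the corollary should be read under the tacit assumption that $K$ is non-trivial; you may want to make that explicit.
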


  Thus we conclude that the closure of a complex braid with respect to the Dehornoy ordering is also complex with respect to a topological view point.\\

\textbf{Acknowledgments.}
 The author gratefully acknowledges the many helpful suggestions of professor Toshitake Kohno during the preparation of the paper. This research was supported by JSPS Research Fellowships for Young Scientists.

\section{Preliminaries}

   In this section, we present some of basic facts of the Dehornoy ordering and Birman-Menasco's braid foliation theory which will be used in later.

\subsection{Properties of the Dehornoy floor}
  
  In this subsection we review some properties of the Dehornoy ordering and the Dehornoy floor. For details, see \cite{i}.
  For $1\leq i < j \leq n$, let $\{a_{i,j}\}$ be an $n$-braid defined by 
\[ a_{i,j}= (\sigma_{i}\sigma_{i+1}\cdots \sigma_{j-2})\sigma_{j-1}(\sigma_{j-2}^{-1}\cdots\sigma_{i+1}^{-1} \sigma_{i}^{-1}). \]

 The braids $\{a_{i,j}\}_{1<i<j\leq n}$ are called the {\it band generators}. 
 The following properties of the Dehornoy floor are proved by Malyutin, Netsvetaev and the author.
 
\begin{prop}[\cite{mn},\cite{ma},\cite{i}]
\label{prop:dehornoyfloor}
   Let $\alpha,\beta \in B_{n}$. Then the following holds.
\begin{enumerate}
\item If a braid $\beta$ is conjugate to a braid which is represented by a word which contains $s$ $\sigma_{1}$ and $k$ $\sigma_{1}^{-1}$, then $[\beta]_{D} < max\{s,k\}$ holds.
\item $|[\beta]_{D}-[\alpha]_{D}|\leq 1$ if $\alpha$ and $\beta$ are conjugate.
\item $[\alpha \beta]_{D} \leq [\alpha]_{D} + [\beta]_{D} + 1$.
\item If a braid $\beta$ is conjugate to a braid represented as a product of $m$ band generators, then $[\beta]_{D} < \frac{m}{n}$ holds.
\end{enumerate}
\end{prop}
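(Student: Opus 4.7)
The plan is to prove the four parts by combining left-invariance of $<_D$ with the centrality of $\Delta^{2}$ for the algebraic bounds in (2) and (3), and by more delicate conjugation and generator-counting arguments for (1) and (4). Throughout, the key identities are $\gamma\Delta^{2k}=\Delta^{2k}\gamma$ (from centrality) and the preservation of $<_D$ under left-multiplication.

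Part (3) is the most direct. Setting $a=[\alpha]_{D}$ and $b=[\beta]_{D}$, the definition gives $\alpha<_{D}\Delta^{2a+2}$ and $\beta<_{D}\Delta^{2b+2}$. By left-invariance and centrality, $\alpha\beta<_{D}\alpha\Delta^{2b+2}=\Delta^{2b+2}\alpha<_{D}\Delta^{2b+2}\Delta^{2a+2}=\Delta^{2(a+b+1)+2}$, and the reversed inequality follows by the symmetric argument, proving $[\alpha\beta]_{D}\leq a+b+1$. Naively combining (3) with the conjugation $\gamma^{-1}\beta\gamma$ yields only the weak estimate $[\gamma^{-1}\beta\gamma]_{D}\leq 2[\gamma]_{D}+[\beta]_{D}+2$, so a sharper tool is required for (2). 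My approach for (2) is to compare directly: if $\beta<_{D}\Delta^{2k+2}$, one exploits that $\gamma^{-1}\Delta^{2k+2}\gamma=\Delta^{2k+2}$ and studies how much the failure of right-invariance can shift $\gamma^{-1}\beta\gamma$ relative to $\Delta^{2}$-powers. The reason the error is precisely $1$ is that $\gamma^{-1}\beta\gamma$ and $\beta$ differ by the commutator $[\gamma^{-1},\beta]$, whose Dehornoy floor is bounded by an absolute constant; this is essentially the translation-number viewpoint, and its verification is the technical heart of (2).

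Parts (1) and (4) require arguments particular to $\sigma_{1}^{\pm 1}$ and to band generators respectively. For (1), after conjugating $\beta$ into a word with $s$ positive and $k$ negative $\sigma_{1}$ letters, one compares this word directly with $\Delta^{\pm 2\max\{s,k\}}$ by exploiting that $\Delta^{2}$ in standard form contains a controlled number of $\sigma_{1}$ occurrences per factor; then (2) absorbs the ambiguity introduced by the initial conjugation. The main obstacle is (4): the factor $1/n$ strongly suggests a cyclic-rotation argument among the $n$ strands. I would try to show that some cyclic conjugate of a product $g_{1}\cdots g_{m}$ of band generators can be rewritten, using conjugacy of all $a_{i,j}$ to $\sigma_{1}$, so that at most $\lceil m/n\rceil$ of the factors are forced to contribute a $\sigma_{1}^{\pm 1}$, then feed this into (1) and (2). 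Making the pigeonhole argument precise while keeping track of how commutations through the $a_{i,j}$ generators affect the $\sigma_{1}$-count is the step I expect to be the hardest.
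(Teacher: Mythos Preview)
The paper does not prove this proposition; it is quoted from \cite{mn}, \cite{ma}, \cite{i} and used as a black box, so there is no proof in the paper to compare against. Your argument for (3) is correct and complete, but the other three parts have genuine gaps.

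For (2), the decisive step is exactly the one you leave unjustified: that the commutator $[\gamma^{-1},\beta]$ has Dehornoy floor bounded independently of $\gamma$. This does not follow from left-invariance and centrality of $\Delta^{2}$ alone---those properties hold for \emph{any} left ordering on $B_{n}$ with $\Delta^{2}$ central and cofinal, and in that generality conjugation can shift elements arbitrarily far. What makes (2) true is a combinatorial fact specific to the Dehornoy ordering, essentially that $1<_{D}\alpha$ implies $\Delta^{-2}<_{D}\gamma\alpha\gamma^{-1}$ for every $\gamma$, proved in the cited references via the $\sigma$-positive word machinery; invoking ``the translation-number viewpoint'' names the phenomenon but does not establish it. Parts (1) and (4) are, by your own description, plans rather than proofs. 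For (1), counting $\sigma_{1}$'s in a fixed word for $\Delta^{2}$ does not control $<_{D}$, which is defined through the existence of \emph{some} $\sigma$-positive representative, and using (2) at the end to undo the initial conjugation would cost an extra $+1$ that the stated inequality does not allow. For (4), the pigeonhole rewriting you propose---arranging that only $\lceil m/n\rceil$ of the band-generator factors contribute a $\sigma_{1}^{\pm1}$---cannot be carried out in general; the argument in \cite{i} instead exploits the relation $\delta^{n}=\Delta^{2}$ for $\delta=\sigma_{1}\cdots\sigma_{n-1}$ in the band-generator presentation to compare a product of $m$ band generators directly with powers of $\delta$, hence of $\Delta^{2}$.
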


These properties will be used later to estimate the Dehornoy floor.

\subsection{Braid foliation theory}
In this subsection, we summarize a basic machinery of Birman-Menasco's braid foliation theory in the case of the incompressible Seifert surface. For details of the braid foliation theories, see \cite{bf}.

   Fix an unknot $A \in S^{3}$, called {\it axis} and choose a meridinal disc fibration $H = \{ H_{\theta}\;|\; \theta \in [0,2\pi]\}$ of the solid torus $S^{3} \backslash A$.
An oriented link $L$ in $S^{3} \backslash A$ is called a {\it closed braid} with axis $A$ if $L$ intersects every fiber $H_{\theta}$ transversely and each fiber is oriented so that all intersections of $L$ are positive. 
  
   Let $F$ be an oriented, connected spanning surface of $L$ with the maximal Euler characteristics. An orientation of $F$ is defined so that $L = \partial F$ holds. We remark that such a surface is always incompressible in $S^{3} \backslash L$. Then the intersections of fiber $\{H_{\theta}\}$ with $F$ induce a singular foliation of $F$, whose leaves consist of the connected components of the intersection with fibers. The braid foliation techniques are, in short, modifying this foliation simpler as possible and obtain a standard position or representation of braids and surfaces. 
 We say a fiber $H_{\theta}$ is {\it regular} if $H_{\theta}$ transverse $F$ and {\it singular}
if $H_{\theta}$ tangent to $F$. 
A non-singular leaf is called {\it a-arc} if one of its boundary point lies on $L$ and the other lies on $A$. Similarly, non-singular leaf is called a {\it b-arc} if both of its boundary points lie on $A$. 
We say b-arc $b$ is {\it essential} if the both connected components of $H_{\theta} \backslash b$ are pierced by $L$. 
   
$F$ can be isotoped to a good position with respect to the fibration which satisfies the following conditions, which we call {\it braid-foliation conditions} \cite{bf}.
\begin{enumerate}
\item Axis $A$ pierces $F$ transversely in finitely many points.
\item For each point $v \in A\cap F$, there exists a neighborhood $N_{v}$ of $v$ such that $F\cap N_{v}$ is a radially foliated disc.
\item All but finitely many fibers $H_{\theta}$ intersect $F$ transversely, and each of the exceptional fiber is tangent to $F$ at exactly one point. Moreover, each point of tangency is a saddle tangency and lies in the interior of $F \cap H_{\theta}$.
\item All regular leaves are a-arc or b-arc, and every b-arc is essential.
\end{enumerate}

   Now assume that $F$ satisfies the braid foliation conditions. We call an intersection point of $A\cap F$ {\it vertex}. For each vertex $p$, the valance of vertex $p$ is, by definition, the number of singular leaves which pass $p$. 
We say a singular point is an {\it aa-singularity} if the singular point is derived from two a-arcs.
An {\it ab-singularity} and a {\it bb-singularity} are defined by the same way. Each type of singularity has a foliated neighborhood as shown in the figure \ref{fig:regions}. We call these neighborhoods of singularities {\it regions}. The decomposition of the surface $F$ into regions defines a cellular decomposition of $F$.
It is directly checked that the valance of a vertex which is previously defined, the number of singular leaves which pass the vertex, coincide with the usual definition of the valance in this cellular decomposition.

\begin{figure}[htbp]
 \begin{center}
\includegraphics[width=70mm]{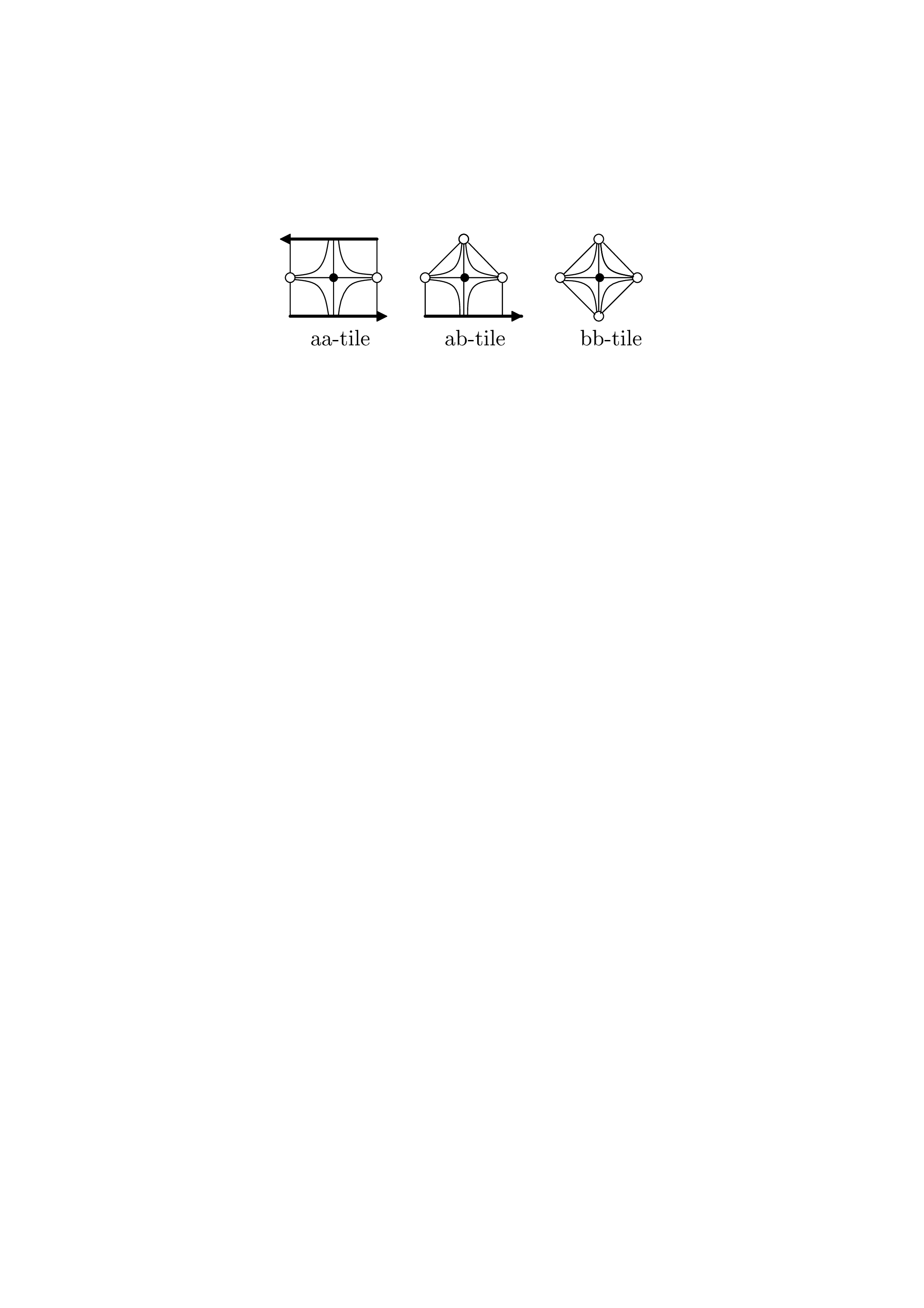}
 \end{center}
 \caption{aa-,ab-,bb- tiles}
 \label{fig:regions}
\end{figure}
 
  We say the sign of a singular point $p$ in the fiber $H_{\theta}$ is {\it positive} (resp. {\it negative}) if the outward pointing normal vector of $F$ at $p$ agrees (resp. disagrees) with the normal vector of the fiber $H_{\theta}$ at $p$. 

The notion of sign is used to decrease the valance of a vertex by an isotopy of the surface $F$. 

\begin{lem}[Change of foliation (See \cite{bf})]
\label{lem:folchange}
Let $F$ be foliated surface as the above.
If there exist adjacent bb-singular points along the neighborhood of a vertex $v$ with the same signs, then by an isotopy of $F$, we can decrease the valance of $v$ by one.
\end{lem}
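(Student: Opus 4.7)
The plan is to build an explicit local ambient isotopy of $F$ supported in a neighborhood of the union of the two offending bb-tiles, realize it as a change of the foliation on $F$, and check it has the desired effect on the valence of $v$. First I would set up the local model. Let $R_1, R_2$ be the two bb-tiles at $v$ carrying the singular points $s_i \in H_{\theta_i}$ of the same sign, with $\theta_1 < \theta_2$. Their union $D = R_1 \cup R_2$ is an embedded disk in $F$ containing $v$ on its boundary, and the two tiles share a b-arc $\gamma \subset H_{\theta_0}$ with $\theta_1 < \theta_0 < \theta_2$ that connects $v$ to some other vertex $w$ of $A \cap F$; this arc is exactly the edge of the cellular decomposition at $v$ that we aim to remove.

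Next I would exploit the sign condition. Because all regular leaves of $D$ are b-arcs and there are only two singularities in its interior, the disk $D$ can be placed in a thin slab neighborhood $D \times [-\epsilon,\epsilon]$ swept by the fibers $\{H_\theta\}$ for $\theta$ running slightly beyond $[\theta_1,\theta_2]$. The sign of $s_i$ records on which side of $H_{\theta_i}$ the surface $F$ locally lies; by hypothesis both signs agree, so the tangent planes of $F$ at $s_1$ and $s_2$ bulge to the same side of the slab. This is precisely the configuration in which the standard Birman-Menasco change-of-foliation move applies: one pushes a finger of $F$ through the slab, transposing $\gamma$ with an arc on the opposite side. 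After this isotopy, $D$ still carries two bb-singularities of the same sign and type, but the shared edge has been replaced by a b-arc joining a different pair of vertices, so $\gamma$ no longer touches $v$. Consequently the valence of $v$ drops by one.

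The main technical obstacle is checking that the isotopy remains compatible with the global braid-foliation conditions: the axis $A$ must still pierce $F$ transversely in the same vertex set, the exceptional fibers must remain simple saddle tangencies, and every b-arc produced must still be essential. Transversality and the singularity count follow because the isotopy is supported away from $A$ and only rearranges the two existing saddles. Essentiality of the new b-arcs is inherited from the old ones, since the new arcs lie in the same complementary regions of $A$ as the arcs they replace. The sign hypothesis is what makes this work: if $s_1$ and $s_2$ had opposite signs, the required finger push would force $F$ through itself or through the axis, so no such isotopy could exist, which is why matching signs are assumed.
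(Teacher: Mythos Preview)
The paper does not actually prove this lemma: it is stated with the attribution ``(See \cite{bf})'' and used as a black box, so there is no in-paper argument to compare against. Your sketch is a reasonable outline of the standard Birman--Finkelstein change-of-foliation move and captures the essential mechanism: the union $R_1\cup R_2$ of the two adjacent bb-tiles is a hexagonal disc, the same-sign hypothesis allows an isotopy of $F$ (supported in a product neighborhood of that disc) that interchanges the $\theta$-order of the two saddles, and after the move the intermediate b-arc separates the hexagon along a different diagonal, so that $v$ belongs to only one of the two new tiles and its valence drops by one.

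Two places deserve tightening. First, your description of the isotopy as ``pushing a finger through the slab, transposing $\gamma$ with an arc on the opposite side'' is vague; the actual content is that one tilts $F$ near the two saddles so that the singularity formerly at $\theta_2$ now occurs before the one formerly at $\theta_1$, which is exactly where the sign hypothesis is used (with opposite signs the two sheets of $F$ would be forced to cross during the interchange). Second, the essentiality claim---``inherited from the old ones, since the new arcs lie in the same complementary regions''---is not quite right as stated: the new intermediate b-arc joins a \emph{different} pair of vertices and separates the fiber into different discs, so essentiality does not follow by inclusion of regions. In the Birman--Finkelstein argument one checks directly that the new arc still separates strands of $L$, or else observes that an inessential b-arc would permit a further simplification; either route suffices, but you should say which one you are invoking.
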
 

\section{Proof of theorem \ref{thm:main}}

   In this section we prove theorem \ref{thm:main}. The strategy of proof is the following. We first establish the Euler characteristic formula for a Seifert surface, which relate the valance of vertices and the Euler characteristics of the surface. Next we estimate the Dehornoy floor of the braid using the valance of vertices. Combining these two results, we obtain the desired estimation.

  We call a $n$-punctured disc $D_{n} = \{ z \in \C \: |\: |z| \leq n+1 \} \backslash \{1,2,3,\cdots, n\}$ the {\it standard} $n$-{\it punctured disc}. We regard an $n$-braid $\beta$ as a move of puncture points of the standard $n$-punctured disc. We call the region $D_{n} \cap \{ z \in D_{n} \:|\: \textrm{Im} z \leq 0\}$ the {\it lower half of the standard punctured disc}.

   Let $F$ be a spanning surface of a closed $n$-braid $L=\widehat{\beta}$ with the maximal Euler characteristics. We isotope $F$ so that $F$ satisfies the braid-foliation conditions. 
 Let $V(a,b)$ be the number of vertices in the tiling of $F$ whose valance is $a+b$ and which have $a$ a-arcs as their edges and $b$ b-arcs as their edges. We call such a vertex {\it type} $(a,b)$-{\it vertex}.

 The following lemma has proved in \cite{bm} using the simple Euler characteristic argument.
   
\begin{lem}[Birman-Menasco(\cite{bm})]
\label{lem:eulerform}

\begin{eqnarray*}
     & \lefteqn{2V(1,0)+2V(0,2)+V(0,3)-4 \chi(F) } \hspace{1cm}  \\
  &=& V(2,1)+2V(3,0) + \sum_{v=4}^{\infty}\sum_{\alpha=0}^{v}(v+\alpha -4)V(v,\alpha ) \\
\end{eqnarray*}

\end{lem}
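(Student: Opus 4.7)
The plan is to compute $\chi(F)$ directly from the cellular decomposition of $F$ induced by the braid foliation and then rearrange. The $0$-cells are the axis vertices, sorted into the types $(a,b)$, together with the L-vertices where singular leaves meet $L=\partial F$. The $1$-cells are the a-arcs, the b-arcs, and the arcs cut out on $L$ between consecutive L-vertices. The $2$-cells are the aa-, ab-, and bb-tiles of Figure~\ref{fig:regions}; write $N_{aa}$, $N_{ab}$, $N_{bb}$ for their numbers.

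First I would tabulate the combinatorial structure of each tile type: how many axis corners, L-corners, a-arc edges, b-arc edges and L-edges appear on its boundary. This yields tile-side incidence relations expressing $E_a$, $E_b$, and the number of L-edges and L-vertices as linear combinations of $N_{aa}$, $N_{ab}$, $N_{bb}$. From the vertex-side summation one reads off $E_a=\sum_{a,b}a\,V(a,b)$ and $2E_b=\sum_{a,b}b\,V(a,b)$, while summing axis-valences over every tile corner gives a relation of the shape $\sum_{a,b}(a+b)V(a,b)=2N_{aa}+3N_{ab}+4N_{bb}$.

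Substituting these identities into $\chi(F)=V-E+F$ and eliminating the tile and edge counts in favour of the $V(a,b)$ reduces the Euler characteristic to a single linear identity between $\chi(F)$ and the $V(a,b)$. Multiplying through by $4$ and collecting terms by vertex type then produces the stated formula: the valence-$\leq 3$ types $V(1,0)$, $V(0,2)$, $V(0,3)$ acquire the coefficients $2$, $2$, $1$ on the left; the anomalous valence-$3$ types $V(2,1)$ and $V(3,0)$ land on the right with coefficients $1$ and $2$; and all remaining valence-$\geq 4$ vertex types receive the weight $(v+\alpha-4)$.

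The asymmetric appearance of $V(0,3)$ on the left versus $V(3,0)$ and $V(2,1)$ on the right is forced by the fact that a-arcs create L-vertices and L-edges whereas b-arcs do not, so trading a b-arc for an a-arc at a valence-$3$ vertex shifts that vertex's contribution in the identity. The main obstacle is therefore the careful tile-by-tile bookkeeping of the L-side quantities; once the incidence tables are written down cleanly, the remainder is a routine linear elimination.
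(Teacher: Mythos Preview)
Your approach is correct and is exactly the ``simple Euler characteristic argument'' the paper alludes to; the paper itself does not give a proof but simply cites Birman--Menasco~\cite{bm}. One small wording slip: the $L$-vertices of the cell decomposition are the endpoints on $L$ of the \emph{non-singular} a-arcs bounding the tiles, not of the singular leaves; but this does not affect your counting, and carrying it through yields $-4\chi(F)=\sum_{a,b}(2a+b-4)V(a,b)$, which is the stated identity once terms with $2a+b<4$ are moved to the left.
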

 
 Now we extract information of the closed braid $\widehat{\beta}$ via the braid foliation of $F$.
The following lemma is the core of this paper. 
 
 \begin{lem}
\label{lem:estimate}
 If $F$ has a type $(a,b)$-vertex, then the inequality
\[ [\beta]_{D} < a + \frac{b}{2} - \frac{1}{2}\]
 holds.
\end{lem}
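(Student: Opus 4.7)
My plan is to use the braid-foliation structure at the type $(a,b)$-vertex $v$ to exhibit a conjugate of $\beta$ as an explicit braid word with controlled complexity, and then invoke Proposition \ref{prop:dehornoyfloor}. First I would analyze $F$ near $v$: the $a+b$ tiles meeting at $v$ determine a cyclic order of $a$ a-arcs (each connecting $v$ to a puncture of $L = \widehat{\beta}$) and $b$ b-arcs (each connecting $v$ to another vertex on the axis $A$). After conjugating $\beta$ to place the $a$ punctures associated to the a-arcs at $v$ into consecutive positions, say in the lower half of the standard $n$-punctured disk and well away from strand $1$, the singularities at $v$ can be tracked as the fibration angle $\theta$ sweeps through $[0,2\pi]$ and recorded as a sequence of braid letters.

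Next, I would count the resulting generators. Each a-arc at $v$ contributes one full band generator $a_{i,j}$ to the word, while each b-arc, being shared between $v$ and another vertex, contributes only half a generator under a consistent splitting convention; the cyclic closure around $v$ absorbs an additional half-generator. This exhibits $\beta$ as conjugate to a product of approximately $n(a + b/2 - 1/2)$ band generators, and Proposition \ref{prop:dehornoyfloor}(4) then yields $[\beta]_{D} < a + b/2 - 1/2$. An alternative route bypassing band generators is to bound the number of $\sigma_{1}^{\pm 1}$ letters in a Dehornoy-normalized word for a conjugate of $\beta$ and invoke Proposition \ref{prop:dehornoyfloor}(1); this version may need a change-of-foliation reduction analogous to Lemma \ref{lem:folchange} to absorb superfluous $\sigma_{1}$-letters introduced by the conjugation.

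The main obstacle will be the rigorous bookkeeping in the counting step: precisely identifying the braid word associated to each singularity at $v$, and making the halving of b-arc contributions consistent across vertices so that the local decomposition assembles into the global braid $\beta$ up to conjugation. The constants $b/2$ and $-1/2$ in the bound are almost certainly forced by this splitting convention together with the cyclic closure at $v$, but verifying the counting requires a concrete model of the braid foliation around $v$ and an analysis of how the ab-tiles—being of mixed type—distribute their contribution between the two incident vertices.
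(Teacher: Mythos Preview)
Your primary route via Proposition \ref{prop:dehornoyfloor}(4) has a genuine gap. A band-generator factorization of $\beta$ records \emph{all} singularities of the foliation, not just those at the chosen vertex $v$; there is no mechanism by which the local data at $v$ bounds the total number of band generators in $\beta$. The factor of $n$ you insert in ``approximately $n(a+b/2-1/2)$ band generators'' has no justification --- it appears to be reverse-engineered so that dividing by $n$ gives the desired bound. Likewise, the ``half a generator'' convention for b-arcs is not a real phenomenon: a b-arc at $v$ joins $v$ to another axis vertex, but there is no global decomposition of $\beta$ into band generators in which contributions are shared between endpoints in this way.

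Your alternative route, bounding the number of $\sigma_{1}^{\pm1}$ in a conjugate and invoking Proposition \ref{prop:dehornoyfloor}(1), is what the paper actually does, but you are missing its central idea. The point is not to place the relevant strands ``well away from strand $1$'' but the opposite: one conjugates so that the vertex $v$ sits at the \emph{leftmost} position in each fiber disc. Then every occurrence of $\sigma_{1}^{\pm1}$ in $\beta'$ must come from a singularity that involves $v$, and nothing else in the foliation contributes to the $\sigma_{1}$-count. With this normalization, an aa-singularity at $v$ contributes one $\sigma_{1}^{\pm1}$ (a band generator $a_{1,j}^{\pm1}$), a bb-singularity contributes none, and an ab-singularity contributes a block $(\sigma_{1}\cdots\sigma_{j})^{\pm1}$. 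The factor $b/2$ does not arise from ``sharing'' b-arcs between vertices; it arises because on each interval where the leaf through $v$ is a b-arc, that b-arc is essential and splits the fiber, so the braiding there decomposes into two independent blocks, and adjacent such blocks can be merged across bb-singularities (a ``B-B simplification'') so that pairs of intervals together contribute only one $\sigma_{1}$ and one $\sigma_{1}^{-1}$. A further ``ab-B simplification'' absorbs the ab-singularity words into neighboring blocks. The $-\tfrac12$ comes from integrality in the $b=0$ case and from a change-of-foliation argument (Lemma \ref{lem:folchange}) when $a=0$ and $b$ is odd. None of this bookkeeping is about splitting contributions among vertices; it is entirely local to $v$ once $v$ has been placed leftmost.
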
 

\begin{proof}

We prove the lemma by estimating the number of $\sigma_{1}^{\pm 1}$ in the braid $\beta$.
The proof of lemma is divided into five steps. In the first step, we modify the closed braid $\widehat{\beta}$ and the surface $F$ so that we can decompose the braid into smaller pieces. In the second step we obtain an explicit description of the modified braid by using braid foliation.
In the third step we describe the method to simplify the obtained description of the braid so that it contains less $\sigma_{1}^{\pm 1}$. The fourth step is devoted ti the proof in case of $a=0$ or $b=0$.
In the last step, we treat the case both $a$ and $b$ are non-zero.\\

\textbf{Step 1: Modifying surface and closed braid}\\

Let $v$ be a type $(a,b)$ vertex of $F$. Let $\{ H_{\theta_{i}} \; | \; i=1,2, \cdots, a + b, \; \; \theta_{i}< \theta_{i+1}\}$ be a sequence of singular fibers around $v$. That is, $H_{\theta_{i}}$ is a singular fiber which contains a singular leave which pass the vertex $v$. We denote the leaf in $H_{\theta}$ which passes $v$ by $\delta_{\theta}$. 

Take a sufficiently small number $\varepsilon >0$ so that there are no singularities in the interval $[\theta_{i}-\varepsilon,\theta_{i}+\varepsilon]$ except $H_{\theta_{i}}$.
We modify the closed braid $\widehat{\beta}=L$ and the surface $F$ so that they satisfy the following conditions.
\begin{enumerate}
\item $H_{\theta_{i} \pm \varepsilon} \backslash (H_{\theta_{i} \pm \varepsilon} \cap L)$ is the standard $n$-punctured disc $D_{n}$.
\item All vertices and a-arcs in a fiber $H_{\theta_{i} \pm \varepsilon}$ lie in the lower half of the disc $H_{\theta_{i} \pm \varepsilon} =D_{n}$.
\item The vertex $v$ lies at the leftmost position in the boundary of lower half of the disc $D_{n}$.
\item If all of $\{ \delta_{\theta} \}$ are b-arc in the interval $[\theta_{i}+\varepsilon,\theta_{i+1}-\varepsilon]$, then these b-arcs do not move in $[\theta_{i}+\varepsilon,\theta_{i+1}-\varepsilon]$.
\end{enumerate} 

 These conditions are achieved by the following way.
 First we isotope $\beta$ and $F$ so that the condition (3) and (4) holds. Now the condition (1) and (2) are achieved by the isotopy near singular fibers $H_{\theta_{i}}$, which preserves the condition (3) and (4).
 
 We denote this modified closed braid by $\widehat{\beta'}$. By cutting the closed braid $\widehat{\beta'}$ at the fiber $H_{\theta_{1}- \varepsilon}$, we obtain a braid $\beta'$.
 Since the above modification is an isotopy of the closed braid $\widehat{\beta}$ and the surface $F$ in the complement of the axis of $\widehat{\beta}$, so the braid $\beta'$ is conjugate to the original braid $\beta$. The benefit of this modification is the following. First, from the condition (1), we can decompose the braid $\beta'$ by the product of sub-braidings in each interval $[\theta_{i}-\varepsilon,\theta_{i}+\varepsilon]$ and $[\theta_{i}+\varepsilon,\theta_{i+1}-\varepsilon]$. The conditions (2)-(4) will be used to obtain the description of $\beta'$ in step 2. \\

\textbf{Step 2: Description of $\beta'$}\\

Our next task is to obtain an explicit description of the braid $\beta'$.
 First of all, we study the braiding near the singularities, that is, the braiding in the interval $[\theta_{i}-\varepsilon , \theta_{i}+\varepsilon]$. 
 
By seeing the moves of leaves on surfaces, we can obtain the explicit form of the braiding in each interval $[\theta_{i}-\varepsilon,\theta_{i}+\varepsilon]$. Especially, from the condition (2) and (3) in step 1, we have already known the first and the last configurations of leaves, so it is not hard to see how the leaves move. Here we simply state a result. We will present the moves of leaves by showing figures, which will convince the reader of the result. See \cite{bh} for detailed arguments to obtain a braid word via the braid foliation.

If the singularity in $H_{\theta_{i}}$ is an aa-singularity, then the braiding in $[\theta_{i}-\varepsilon,\theta_{i}+\varepsilon]$ is given by
\[ a_{1,j}^{\pm 1}=(\sigma_{1}\sigma_{2}\cdots\sigma_{j-1}\sigma_{j}^{\pm 1}\sigma_{j-1}^{-1}\cdots \sigma_{1}^{-1}). \]
 This is a band generator, which corresponds to a twisted band between two disc neighborhood of vertices.
 Thus, one aa-singularity produces one $\sigma_{1}^{\pm 1}$. See figure \ref{fig:aa-singularity}. 
 
 \begin{figure}[htbp]
 \begin{center}
\includegraphics[width=90mm]{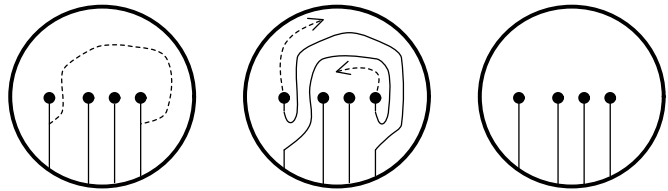}
 \end{center}
 \caption{Moves of of leaves near aa-singularity}
 \label{fig:aa-singularity}
\end{figure}
If the singularity in $H_{\theta_{i}}$ is an ab-singularity, then the braiding in $[\theta_{i}-\varepsilon,\theta_{i}+\varepsilon]$ is given by
\[(\sigma_{1}\sigma_{2}\cdots\sigma_{j}) \textrm{ or } (\sigma_{j}^{-1}\sigma_{j-1}^{-1}\cdots \sigma_{1}^{-1}). \]

We say an ab-singularity giving the former type of words {\it type a-b}. The name type a-b is derived from the fact that along the neighborhood of an ab-singularity, the leaf $\delta_{\theta}$ is changed from a-arc to b-arc. Similarly, an ab-singularity giving the latter type of words {\it type b-a}. See figure \ref{fig:ab-singularity}.

\begin{figure}[htbp]
 \begin{center}
\includegraphics[width=90mm]{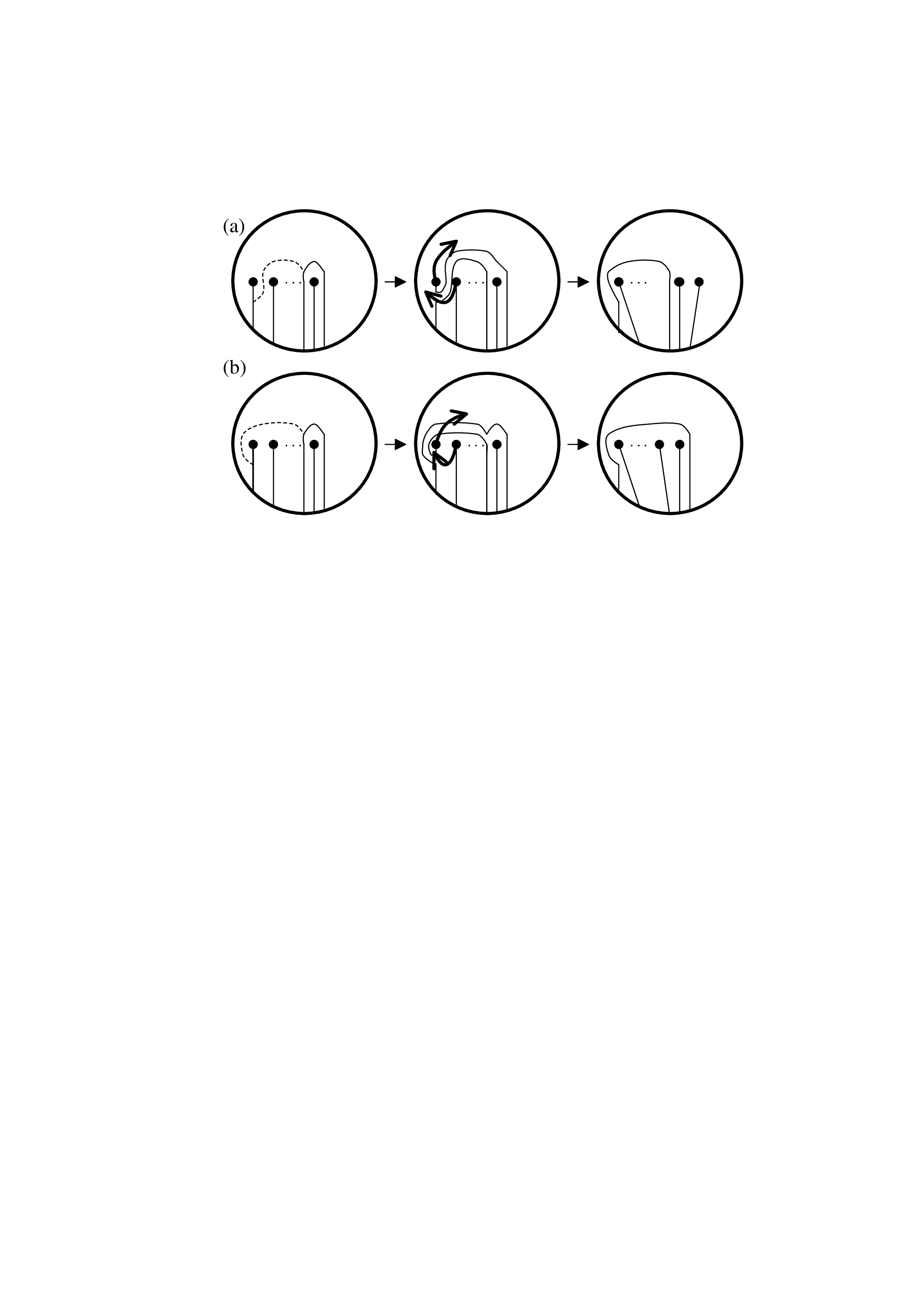}
 \end{center}
 \caption{Move of leaves near ab-singularity}
 \label{fig:ab-singularity}
\end{figure}

There are two kinds of ab-singularities. The first kind ab-singularity is an ab-singularity such that the a-arc is attached to the b-arc from the right side, as depicted in the figure \ref{fig:ab-singularity}(a). The second kind ab-singularity is an ab-singularity such that the a-arc is attached to the b-arc from the left side, as depicted in the figure \ref{fig:ab-singularity}(b). Although both kinds of ab-singularities provide the same braid words if their types are the same, but the configurations of b-arcs and braid strands after the ab-singularities are different. Therefore, in some cases we must distinguish them.

Around a bb-singularity, all a-arcs are not changed. Thus the link $L$, viewing as a boundary points of a-arcs, does not move in the neighborhood of the bb-singularity. Thus, a bb-singularity does not produce any $\sigma_{1}^{\pm 1}$. Therefore, when the singularity contained in $H_{\theta_{i}}$ is a bb-singularity, then the braiding in the interval $[\theta_{i}-\varepsilon,\theta_{i}+\varepsilon]$ is trivial.

Next we study a braiding outside the neighborhood of singularities, that is, in the interval $I=[\theta_{i}+\varepsilon,\theta_{i+1}-\varepsilon]$.
First observe there exist two types of such intervals $[\theta_{i}+\varepsilon,\theta_{i+1}-\varepsilon]$. Namely, 
\begin{enumerate}
\item An interval between two aa-singularities, or an interval between an aa-singularity and an ab-singularity.
\item An interval between an ab-singularity and an aa-singularity, or an interval between two bb-singularities.
\end{enumerate}

   We call each type of intervals {\it type A, type B} respectively because in a type A (resp. type B) interval, the leaf $\delta_{\theta}$ is always an a-arc (resp. b-arc).

Let $I$ be a type A interval and $\beta_{I}$ be the braiding in the interval $I$.
Let us denote the the $1$st strand of the braid $\beta_{I}$, which corresponds to the boundary point of the a-arc $\delta_{\theta}$, by $b_{1}$. Since in the interval $I$, the a-arcs $\delta_{\theta}$ do not form a singularity, the strand $b_{1}$ is not braided with other strands of $\beta_{I}$. Therefore the braid $\beta_{I}$ can be written as in the left diagram of figure \ref{fig:braiding_in_interval}.

Next we consider the braiding in a type B interval $J$. Since the b-arc $\delta_{\theta}$ is essential, the leaf $\delta_{\theta}$ separates each fiber $H_{\theta}$ into two components, both of which are pierced by $L$. Recall that we have modified the surface $F$ so that in the interval $J$, the leaf $\delta_{\theta}$ does not move ( condition (4) in step 1). Therefore the braiding in type B interval splits, so we can write the braiding as in the right diagram of figure \ref{fig:braiding_in_interval}. 

\begin{figure}[htbp]
 \begin{center}
\includegraphics[width=60mm]{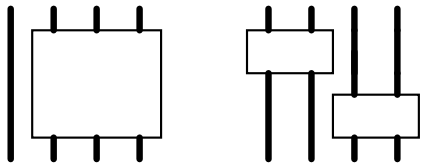}
 \end{center}
 \caption{Braiding in interval $[\theta_{i}+\varepsilon,\theta_{i+1}-\varepsilon]$}
 \label{fig:braiding_in_interval}
\end{figure}

\textbf{Step 3: Simplification procedure}\\

   We obtained an explicit description of the whole braiding of $\beta'$. The next step is to simplify the obtained braid so that it contains less $\sigma_{1}^{\pm1}$ as possible.
   We introduce two operations of simplification, the {\it ab-B simplification} and the {\it B-B simplification}. 

First we explain the ab-B simplification. Let $N$ be the $\varepsilon$-neighborhood of an ab-singular point and $I$ be the adjacent type $B$ interval. From step 2, the braiding in $N \cup I$ can be written as in the left diagram of figure \ref{fig:modification1}. 

If the ab-singularity is the first kind, that is, the a-arc is attached from right, then a braid box in $I$ which contains the strands $1$ can be shifted across the braiding in $N$ so that the modified braid contains only one $\sigma_{1}^{\pm1}$. 
If the ab-singularity is the second kind, that is, the a-arc is attached from left, then the braiding in $N$ is amalgamated into the adjacent braiding box in $I$ so that we can neglect the braiding derived from $N$. See figure \ref{fig:modification1}. In the figure, we show the type a-b singularity case. The type b-a singularity case is similar.  

\begin{figure}[htbp]
 \begin{center}
\includegraphics[width=100mm]{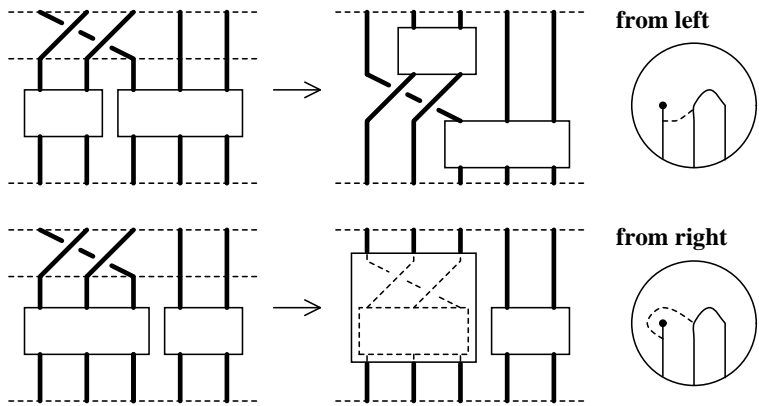}
 \end{center}
 \caption{ab-B simplifications} 
 \label{fig:modification1}
\end{figure}

   Next we explain the B-B simplification. Assume that two type B intervals $I_{1},I_{2}$ are adjacent to the interval $J$, which is a neighborhood of a bb-singularity. 
Then by exchanging the order of braid boxes in $I_{2}$, we can modify the braiding in the interval $I_{1}\cup J \cup I_{2}$ as in figure \ref{fig:modification2}. Then the obtained braiding in $I_{1} \cup J \cup I_{2}$ contains only one $\sigma_{1}$ and $\sigma_{1}^{-1}$.\\

\begin{figure}[htbp]
 \begin{center}
\includegraphics[width=80mm]{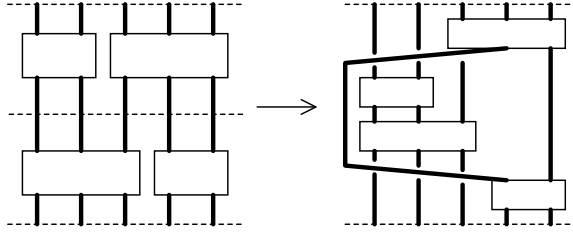}
 \end{center}
 \caption{B-B simplifications} 
 \label{fig:modification2}
\end{figure}

\textbf{Step 4: Proving lemma in $a=0$ or $b=0$ case}\\

Now we are ready to prove the lemma. 
First we consider the case $b=0$. In this case, there exist only aa-singularities around the vertex $v$.
Recall that we have shown that one aa-singularity provides only one $\sigma_{1}^{\pm 1}$. Thus, the braid $\beta'$ has at most $a$ $\sigma_{1}^{\pm1}$. Hence by proposition \ref{prop:dehornoyfloor}, we obtain $[\beta]_{D} < a$.
Since both $[\beta]_{D}$ and $a$ are integers, we conclude the inequality $[\beta]_{D} < a - \frac{1}{2}$ holds.

Next we consider the case $a=0$. In this case, there exist only bb-singularities around the vertex $v$.
Then, around the vertex $v$, there are $b$ type B intervals. By performing the B-B simplification which we described in step 3, we conclude that the braid $\beta'$ has at most $\frac{b}{2}$ $\sigma_{1}^{\pm1}$.
Thus, by proposition \ref{prop:dehornoyfloor}, we obtain the inequality $[\beta]_{D} < \frac{b}{2}$. 
If $b$ is even, then both $[\beta]_{D}$ and $\frac{b}{2}$ are integers. Thus we conclude that the inequality $[\beta]_{D} < \frac{b}{2} - \frac{1}{2}$ holds.
If $b$ is odd, then there must be adjacent bb-singular points with the same sign around the vertex $v$. Now the lemma \ref{lem:folchange} implies that we can decrease the valance of $v$ by one by moving the surface $F$. Therefore in this case we also obtain the inequality $[\beta]_{D} < \frac{b}{2}-\frac{1}{2}$.\\

\textbf{Step 5: Proving lemma in $a\neq 0$ and $b \neq 0$ case}\\

We complete the proof by showing the case that both $a$ and $b$ are non zero.

In this case, there must exist at least two ab-singularities around the vertex $v$. Then the cycle of singularities around the vertex $v$ are decomposed as the repetitions of the sub-cycles of the form 
\[ \{ \underbrace{aa \rightarrow aa \rightarrow \cdots \rightarrow aa}_{k\; \mbox{times}} \rightarrow ab \rightarrow \underbrace{ bb \rightarrow bb \rightarrow \cdots \rightarrow bb}_{l\;\mbox{times}} \rightarrow ab \} \] 

which contains $k$ aa-singularities and $l$ bb-singularities. We remark that $k$ and $l$ might be zero.

Recall that from step 2, one aa-singularity provide at most one $\sigma_{1}^{\pm1}$ and type A intervals and bb-singularities have no contributions to the number of $\sigma_{1}^{\pm 1}$.
Thus, the aa-singularities and type A intervals contribute the number of $\sigma_{1}^{\pm 1}$ by at most $k$.

We reduce the number of $\sigma_{1},\sigma_{1}^{-1}$ derived from ab-singularities and type B intervals using the ab-B and B-B simplifications described in step 3. First observe that there are three patterns of ab-singularities in the sub-cycle.
\begin{enumerate}
\item Both ab-singularities are from left.
\item One ab-singularity is from left, and the other is form right.
\item Both ab-singularities are from right.
\end{enumerate} 
  We consider each case separately.
 
 If both ab-singularities are from left, then the ab-B simplification amalgamates the braiding derived from the ab-singularities with the adjacent braid block derived from the type B interval. So we can neglect the braiding derived from the ab-singularities. Thus in this case, we only need to consider the contribution of $\sigma_{1}^{\pm1}$ derived from type B intervals. Then, the number of type B intervals is $l+1$, so these type B intervals contribute at most $\frac{l+1}{2}$ $\sigma_{1}^{\pm1}$. Consequently, in this case the sub-cycle contains at most $(\frac{l+1}{2}+k)$ $\sigma_{1}$ or $\sigma_{1}^{-1}$. 

 Next we assume that one of the ab-singularities is from left and the other is from right. Then by preforming the ab-B simplification, the ab-singularity from right is modified together with the adjacent type B interval so that they contribute one $\sigma_{1}$ or $\sigma_{-1}$. This ab-B simplification deletes one type B intervals. The braiding derived from the ab-singularity from left is amalgamated with the adjacent type B intervals, so we can neglect it. The number of remaining type B intervals is $l$, so the type B intervals contribute at most $\frac{l}{2}$ $\sigma_{1}^{\pm1}$. As a result, in this case there are at most $(\frac{l}{2}+1+k)$ $\sigma_{1}$ or $\sigma_{1}^{-1}$ in the sub-cycle.

 Finally, assume that the both of ab-singularities are from right. Then after ab-B simplifications, the ab-singularities, modified with the adjacent type B intervals, provide one $\sigma_{1}$ and $\sigma_{1}^{-1}$. The ab-B simplifications deletes two type B intervals. Thus the number of remaining type B intervals is $l-1$, and they contribute at most $(\frac{l-1}{2}$ $\sigma_{1}^{\pm1})$. Therefore in this case sub-cycle contains at most $(\frac{l-1}{2}+1+k)$ $\sigma_{1}^{\pm1}$.

Summarizing, we conclude that each sub-cycle contains at most $(k+1+\frac{l}{2})$ $\sigma_{1}^{\pm1}$.
Therefore if we want to the braid $\beta'$ contains $\sigma_{1}$ or $\sigma_{1}^{-1}$ as many as possible, then the number of sub-cycles in the cycle of singularities around the vertex $v$ must be one. 
Then there are $a-1$ aa-singularities and $b-1$ bb-singularities around $v$, so we conclude that the modified braid $\beta'$ has a word representative which has at most $a+\frac{b-1}{2}$ $\sigma_{1}$ or $\sigma_{1}^{-1}$. Since the original braid $\beta$ is conjugate to $\beta'$, by proposition \ref{prop:dehornoyfloor} we conclude that $[\beta]_{D} < a + \frac{b}{2}-\frac{1}{2}$.  
\end{proof}

   Now we are ready to prove theorem \ref{thm:main}.

\begin{proof}[Proof of theorem \ref{thm:main}]
   Let $L$ be an oriented link and $F$ be a Seifert surface of $L$ with maximal Euler characteristics. 
Take a closed braid representative $\widehat{\beta}$ of $L$ and isotope $F$ and $L$ so that $F$ satisfies the braid-foliation conditions.

If there exists a vertex of type (2,0), (1,1), (1,2), (0,2), (0,3) or (0,4), then lemma \ref{lem:estimate} shows $[\beta]_{D} < 2$, hence we obtain the inequality $[\beta]_{D}<\frac{3}{2}$. Therefore we can assume that there exist no vertices of such types. Thus, the Euler characteristic formula is now written as 
\[ -4 \chi(F)= V(2,1)+2V(3,0)+\sum_{v=4}^{\infty}\sum_{a=0}^{v}(v+a-4)V(a,v-a). \]

First assume that $F$ is foliated by only a-arcs. In this case, there exist exactly $n$ vertices on $F$ and exactly $-\chi(F) + n$ aa-singularities on $F$ because aa-singularity can be seen as a twisted band attached to disc neighborhoods of vertices. Therefore, the braid $\beta$ is conjugate to a braid written by a product of $n-\chi(F)$ band generators. Therefore from proposition \ref{prop:dehornoyfloor}, we establish the inequality $[\beta]_{D} < 1-\frac{\chi(F)}{n} \leq \frac{3}{2}- \frac{2\chi(F)}{n+2}$.

Now assume that $F$ contains both b-arc and a-arc. 
Let $(a,b)$ be a pair of integers such that the value $a+\frac{b}{2} = \frac{v+a}{2}$ is minimal among the all pair $(a,b)$ which satisfy $V(a,b) \neq 0$. Since $F$ contains b-arcs, there exist at least $n+2$ vertices in the foliation. Hence from the Euler characteristic formula, the inequality
\[ -4 \chi(F) \leq (2a+b-4)(n+2) \] 
holds. Therefore we obtain the inequality
\[ \frac{-2\chi(F)}{n+2} + 2 \leq a+\frac{b}{2}.\] 
Therefore lemma \ref{lem:estimate} gives desired estimation.
\end{proof}

We close this paper by giving a simple generalization.
There exist infinitely many families of left-invariant total orderings of the braid groups called {\it Thurston-type orderings}. The Dehornoy ordering is a special one of a Thurston type ordering \cite{sw}. Thurston type orderings have similar properties of the Dehornoy ordering. Especially the proposition \ref{prop:dehornoyfloor} also holds for the Thurston floor, which is defined by using a Thurston type ordering instead of the Dehornoy ordering. Hence our main theorem also holds if we use a Thurston floor instead of using the Dehornoy floor.


\begin{thebibliography}{[BF]}
\bibitem[BF]{bf} J.Birman, E.Finkelstein,
{\em{Studying surfaces via closed braids,}}
J. Knot theory Ramifications. , \textbf{7}, No.3 (1998), 267-334. 
\bibitem[BH]{bh} J.Birman, M.Hirsch,
{\em{A new algorithm for recognizing the unknot,}}
Geometry $\&$ Topology , \textbf{2}, (1998), 175-220. 
\bibitem[BM]{bm} J.Birman, W.Menasco,
{\em{Studying surfaces via closed braids VI: Non finiteness theorem,}}
Pacific J. of Math., \textbf{156}, No.2 (1992), 265-285. 
\bibitem[DDRW1]{ddrw1} P.Dehornor, I.Dynnkov, D.Rolfsen and B.Wiest, 
{\em{Why are the braids orderable ?,}}
Panoramas et Synth\'eses \textbf{14}, Soc. Math. France. 2002.
\bibitem[DDRW2]{ddrw2} P.Dehornoy, I.Dynnkov, D.Rolfsen and B.Wiest, 
{\em{Ordering Braids,}}
Mathematical Surveys and Monographs \textbf{148}, Amer. Math. Soc. 2008.
\bibitem[I]{i} T.Ito, 
{\em{Braid ordering and the geometry of closed braid,}}
e-print, arXiv:0805.1447v2  
\bibitem[MN]{mn} A.Malyutin,  N.Netsvetaev,
{\em{Dehornoy's ordering on the braid group and braid moves,}}
St.Peterburg Math. J. \textbf{15}, No.3 (2004), 437-448.
\bibitem[Ma]{ma} A.Malyutin,
{\em{Twist number of (closed) braids,}}
St.Peterburg Math. J. \textbf{16}, No.5 (2005), 791-813.
\bibitem[SW]{sw} H.Short, B.Wiest,
{\em{Ordering of mapping class groups after Thurston,}}
Enseign. Math. \textbf{46},(2000), 279-312.
\end{thebibliography}
\end{document}